\newtheorem{thm}{Theorem}[section]
\newtheorem{cor}[thm]{Corollary}
\newtheorem{lem}[thm]{Lemma}
\newtheorem{prop}[thm]{Proposition}
\newtheorem{exam}[thm]{Example}
\numberwithin{equation}{section}
\theoremstyle{definition}
\newtheorem{rem}{Remark}[section]
\newtheorem{Observation}[thm]{Observation}
\newcommand{\seq}[1]{\langle #1\rangle}
\title{Neumaier Cayley graphs}
\author{Mojtaba Jazaeri}
\address{Department of Mathematics, Shahid Chamran University of Ahvaz, Ahvaz, Iran}
\email{M.Jazaeri@scu.ac.ir, M.Jazaeri@ipm.ir}
\begin{document}

\keywords{Neumaier graph, Cayley graph, Schur ring}

\maketitle


\begin{abstract}
A Neumaier graph is a non-complete edge-regular graph with the property that it has a regular clique. In this paper, we study Neumaier Cayley graphs. We give a necessary and sufficient condition under which a Neumaier Cayley graph is a strongly regular Neumaier Cayley graph. We also characterize Neumaier Cayley graphs with small valency at most $10$.
\end{abstract}

\section{Introduction}

A regular graph with $n$ vertices and valency $k$ is an edge-regular graph with parameters $(n,k,\lambda)$ whenever every pair of adjacent vertices has a unique number $\lambda$ of common neighbors. Let $\Gamma$ be a graph with vertex set $V$. Then a subset $C$ of $V$ is a clique whenever each pair of vertices in $C$ are adjacent. A clique $C$ in the graph $\Gamma$ is called a regular clique whenever every vertex out of the clique $C$ has a constant number $a$ of its neighbors in $C$. The number $a$ is called the nexus of the regular clique $C$. Neumaier in \cite{Neumaier} studied regular cliques in an edge-regular graph and stated the question that is every edge-regular graph with a regular clique a strongly regular graph? Recently, a non-complete edge-regular graph with a regular clique is called a Neumaier graph and it has attracted a great deal of attention among authors. Soicher in \cite{Soicher} studied regular cliques in an edge-regular graph and Greaves and Koolen in \cite{GK} answered the Neumaier question by constructing of infinite examples of Neumaier graphs that are not strongly regular. After that, Abiad et all in \cite{ADDK} studied Neumaier graphs with few eigenvalues and proved that there is no Neumaier graph with exactly four distinct eigenvalues and Evans studied this family of graphs extensively in his Ph.D thesis \cite{Evans}. Moreover, Abiad et all have some other results on this topic in \cite{ACDKZ} recently. In this paper, we deal with Neumaier Cayley graphs. This paper has been organized as follows. We first give some facts on Neumaier graphs and then concentrate on Neumaier Cayley graphs. We give a necessary and sufficient condition under which a Neumaier Cayley graph is a strongly regular Neumaier Cayley graph. We also characterize Neumaier Cayley graphs with small valency at most $10$.

\section{Preliminaries}
In this paper, all graphs are undirected and simple, i.e., there are no loops or multiple edges. A regular graph with $n$ vertices and valency $k$ is an edge-regular graph with parameters $(n,k,\lambda)$ whenever every pair of adjacent vertices has a unique number $\lambda$ of common neighbors. Let $\Gamma$ be a graph with vertex set $V$. Then a subset $C$ of $V$ is a clique whenever each pair of vertices in $C$ are adjacent. A clique $C$ in the graph $\Gamma$ is called a regular clique whenever every vertex out of the clique $C$ has a constant number $a$ of its neighbors in $C$. The number $a$ is called the nexus of the regular clique $C$. A non-complete edge-regular graph with a regular clique is called a Neumaier graph. It is known that if $\Gamma$ is a Neumaier graph with a regular clique $C$ and nexus $a$, then all regular cliques of the graph $\Gamma$ has size $|C|$ and nexus $a$ (cf. \cite[Theorem~1]{Neumaier}). This means that the Neumaier graph $\Gamma$ has parameters $(n,k,\lambda;a,c)$, where $|C|=c$. It is easy to see that a Neumaier graph has diameter two or three because it has a regular clique with nexus $a \geq 1$. The following remarks and lemma are available in literature but in different structures, we state them for convenience.
\begin{rem} \label{Complete graph}
Let $\Gamma$ be a regular graph with a regular clique $C$. If the nexus of the regular clique $C$ equals to $|C|$, then this graph must be a complete graph.
\end{rem}
\begin{rem} \label{eigenvalues}
Let $\Gamma$ be a Neumaier graph with parameters $(n,k,\lambda;a,c)$. Then the regular clique $C$ gives rise to an equitable partition for the graph $\Gamma$ with the following quotient matrix.
\begin{center}
\begin{equation*}
  \left(
    \begin{array}{cc}
      c-1 & k-c+1 \\
      a & k-a \\
    \end{array}
  \right)
\end{equation*}
\end{center}
It follows that $k$ and $c-a-1$ are the eigenvalues of this graph.
\end{rem}
\begin{rem} \label{Complete multipartite graph}
Let $\Gamma$ be a Neumaier graph with parameters $(n,k,\lambda;a,2)$. Then $a=1$ and $\lambda=0$ and therefore this graph is the complete bipartite graph $K_{k,k}$. Similarly, if $\Gamma$ is a Neumaier graph with parameters $(n,k,0;a,c)$, then $c=2$ and $a=1$ and this graph is also the complete bipartite graph $K_{k,k}$. This implies that a Neumaier graph with diameter two and parameters $(n,k,\lambda;a,c)$ has at most $\max \{1+k+k(k-2),2k\}$ vertices.
\end{rem}
\begin{lem} \label{Counting}
Let $\Gamma$ be a $k$-regular graph with a regular clique of size $c$ and nexus $a$. Then $c(k-c+1)=(n-c)a$.
\end{lem}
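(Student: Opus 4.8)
The plan is to count, in two ways, the number of edges of $\Gamma$ with exactly one endpoint in the regular clique $C$; call this quantity $e(C, V \setminus C)$.

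First I would count these edges from the side of $C$. Fix a vertex $v \in C$. Since $\Gamma$ is $k$-regular, $v$ has exactly $k$ neighbors in total. Since $C$ is a clique of size $c$, all other $c-1$ vertices of $C$ are neighbors of $v$, and these account for exactly $c-1$ of its neighbors lying inside $C$ (here one uses that $\Gamma$ is simple, so there are no repeated edges). Hence $v$ has precisely $k-(c-1) = k-c+1$ neighbors in $V \setminus C$. Summing over the $c$ vertices of $C$ gives $e(C, V \setminus C) = c(k-c+1)$.

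Next I would count the same edges from the side of $V \setminus C$. By the definition of a regular clique with nexus $a$, every vertex $w \in V \setminus C$ has exactly $a$ neighbors in $C$. Since $|V \setminus C| = n - c$, summing over all such $w$ yields $e(C, V \setminus C) = (n-c)a$. Equating the two expressions for $e(C, V \setminus C)$ gives the claimed identity $c(k-c+1) = (n-c)a$.

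There is essentially no obstacle here; the only point requiring a moment's care is the bookkeeping in the first count — making sure the $c-1$ internal neighbors of each $v \in C$ are not double-counted against its $k$ total neighbors — which is immediate once one recalls that $\Gamma$ has no multiple edges. No appeal to edge-regularity or to the parameter $\lambda$ is needed; the statement is purely a consequence of $k$-regularity together with the clique and nexus conditions.
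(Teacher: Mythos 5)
Your proof is correct and is exactly the paper's argument: a double count of the edges between $C$ and $V \setminus C$, giving $c(k-c+1)$ from the clique side and $(n-c)a$ from the outside. You simply spell out the bookkeeping the paper leaves implicit.
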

\begin{proof}
By the double counting method, the number of edges between the regular clique $C$ and out of $C$ equal to $c(k-c+1)$ and $(n-c)a$ which completes the proof.

A strongly regular graph with parameters $(n,k,\lambda,\mu)$ is a $k$-regular graph with $n$ vertices such that two vertices has $\lambda$ and $\mu$ common neighbors depending on whether these two vertices are adjacent or non-adjacent, respectively.

Let $G$ be a group and $S$ be an inverse-closed subset without the identity element of the group $G$. Then the Cayley graph $Cay(G,S)$ is a graph whose vertex set is $G$ and two vertices $a$ and $b$ are adjacent (which is denoted by $a \sim b$) whenever $ab^{-1} \in S$. We call the subset $S$ of the group $G$, the connection set of the Cayley graph $Cay(G,S)$. It is known that a Cayley graph $Cay(G,S)$ is connected if and only if the subgroup generated by $S$ (which is denoted by $\seq{S}$) equals $G$. If $C$ is a regular clique in the Cayley graph $Cay(G,S)$, then it is easy to see that $Cg^{-1}$ is also a regular clique for every $g \in G$ and therefore we can suppose that there exist a regular clique containing the identity element of the group $G$.
\end{proof}
\section{Neumaier Cayley graphs} \label{Section diameter two}
Recall that a Neumaier graph has diameter two or three. In this section, we study Neumaier Cayley graphs that they have diameter two (see Observation \ref{Diameter 2} below). Some of the results in this section are known in literature in different formats but most of them are new. We state the results that are necessary to use in other sections for convenience.

Let $Cay(G,S)$ be a Neumaier Cayley graph over the group $G$ with parameters $(n,k,\lambda;a,c)$ and regular clique $C$ containing the identity element $e$. Then $C \setminus \{e\} \subset S$ and every vertex in $C \setminus \{e\}$ has $c-2$ neighbors in $C \setminus \{e\}$, $\lambda-c+2$ neighbors in $S \setminus C$ and therefore $k-\lambda -1$ neighbors in $G \setminus (S\cup\{e\})$. Furthermore, every vertex in $S \setminus C$  has $a-1$ neighbors in $C \setminus \{e\}$, $\lambda-a+1$ neighbors in $S \setminus C$, and therefore $k-\lambda -1$ neighbors in $G \setminus (S\cup\{e\})$ (see Figure \ref{Fig1}). This implies that the number of edges from the vertices in $S \setminus C$ to the vertices in $C \setminus \{e\}$ is
\begin{equation}\label{equation 1}
 (k-c+1)(a-1)=(c-1)(\lambda - c +2)
\end{equation}
\begin{center}
\begin{figure}[h]
 \centering
 \centerline{\includegraphics*[height=7cm]{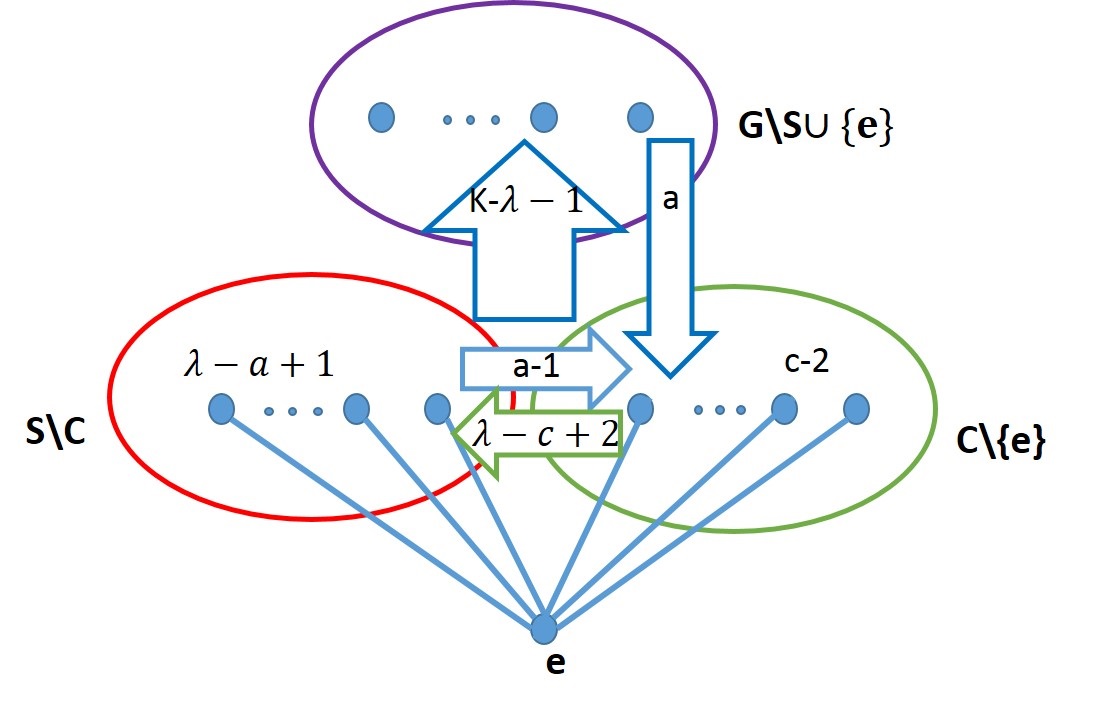}}
 \label{diagram}
 \caption{A Neumaier Cayley graph with diameter two and parameters $(n,k,\lambda;a,c)$} \label{Fig1}
\end{figure}
\end{center}
Moreover, the number of edges from the vertices in $C \setminus \{e\}$ to the vertices in $G \setminus (S\cup\{e\})$ is $(c-1)(k-\lambda -1)$ and the number of edges from the vertices in $G \setminus (S\cup\{e\})$ to the vertices in $C \setminus \{e\}$ is $(n-k-1)a$ since the regular clique $C$ has nexus $a$. It follows that
\begin{equation}\label{equation 2}
 (c-1)(k-\lambda -1)=(n-k-1)a
\end{equation}
Moreover, if every vertex in $G \setminus (S\cup\{e\})$ has a unique number of neighbors $b$ in $S \setminus \{C\}$, then this Neumaier Cayley graph is a strongly regular Neumaier graph with parameters $(n,k,\lambda,\mu;a,c)$, where $\mu=a+b$. To see this let $x$ and $y$ be two other vertices at distance two. Then $xx^{-1}=e$ and $yx^{-1}$ are at distance two. This implies that $x$ and $y$ has $\mu$ common neighbors because $e$ and $yx^{-1}$ has $\mu$ common neighbors. With the above notation for a Neumaier Cayley graph with diameter two, we can conclude the following characterization for a Neumaier Cayley graph to be a Strongly regular Neumaier Cayley graph.
\begin{thm} \label{Strongly regular Neumaier graph}
Let $\Gamma$ be a Neumaier Cayley graph with parameters $(n,k,\lambda;a,c)$. Then $\Gamma$ is a strongly regular Neumaier Cayley graph if and only if every vertex in $G \setminus (S\cup\{e\})$ has a unique number of neighbors in $S \setminus C$.
\end{thm}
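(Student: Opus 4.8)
The plan is to prove both implications by using the vertex-transitivity of $\Gamma$ to reduce every ``distance-two'' count to a count of common neighbors of the identity $e$ and a generic non-neighbor $g\in G\setminus(S\cup\{e\})$, and then to split the common neighborhood of $e$ and $g$ into the part lying in the regular clique $C\setminus\{e\}$ and the part lying in $S\setminus C$. The key structural fact to set up first is that, since $e\notin S$ and $C\setminus\{e\}\subseteq S$, the neighborhood $S$ of $e$ decomposes as the disjoint union $(C\setminus\{e\})\sqcup(S\setminus C)$, and that $C\subseteq S\cup\{e\}$ forces $g\notin C$ for every $g\in G\setminus(S\cup\{e\})$.

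For the ``if'' direction (which is essentially the discussion preceding the theorem, recast), I would first observe that $\Gamma$ is already edge-regular, so it suffices to show that any two non-adjacent vertices $x,y$ have a constant number of common neighbors. The map $z\mapsto zx^{-1}$ is an automorphism of $\Gamma$, so $x$ and $y$ have the same number of common neighbors as $e$ and $g:=yx^{-1}\in G\setminus(S\cup\{e\})$. The common neighbors of $e$ are exactly the elements of $S$; since $g\notin C$, the regular-clique property contributes exactly $a$ of them in $C\setminus\{e\}$ (each such vertex is automatically adjacent to $e\in C$), and the hypothesis contributes exactly $b$ of them in $S\setminus C$. Hence $e$ and $g$ have $a+b$ common neighbors, a constant, so $\Gamma$ is strongly regular with $\mu=a+b$; because $a\ge 1$ this also confirms that every such $g$ is at distance two from $e$, so $\Gamma$ is connected of diameter two and the strongly regular structure is genuine.

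For the ``only if'' direction, suppose $\Gamma$ is strongly regular with parameters $(n,k,\lambda,\mu)$. Fix $g\in G\setminus(S\cup\{e\})$; then $e$ and $g$ are non-adjacent, hence have exactly $\mu$ common neighbors, all in $S=(C\setminus\{e\})\sqcup(S\setminus C)$. Since $g\notin C$, exactly $a$ of these lie in $C\setminus\{e\}$ by the nexus condition, so $g$ has exactly $\mu-a$ neighbors in $S\setminus C$, a value independent of $g$, which is precisely the asserted condition.

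I do not expect a genuine obstacle; the care needed is purely bookkeeping: verifying $g\notin C$ so the nexus count applies, noting that every clique-neighbor of $g$ is automatically a common neighbor of $e$ and $g$, and using $e\notin S$ to identify $S\setminus C$ with $S\setminus(C\setminus\{e\})$. The one slightly conceptual point worth stating explicitly is that, for an already edge-regular vertex-transitive graph, strong regularity is equivalent to the single condition that $e$ and each of its non-neighbors have a constant number of common neighbors; once that reduction is made, both directions collapse to the one-line partition-of-$S$ argument above.
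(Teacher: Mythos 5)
Your proof is correct and follows essentially the same route as the paper: the paper's own argument (given in the discussion immediately preceding the theorem) also uses the translation $z\mapsto zx^{-1}$ to reduce to counting common neighbors of $e$ and a non-neighbor $g$, splits $S$ into $(C\setminus\{e\})\sqcup(S\setminus C)$, and obtains $\mu=a+b$ from the nexus condition plus the hypothesis. Your explicit write-up of the (trivial) ``only if'' direction and of the check $g\notin C$ is just a more careful rendering of what the paper leaves implicit.
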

Therefore we can conclude the following corollary because the number of edges between $G \setminus (S\cup\{e\})$ and $S \setminus C$ is constant.
\begin{cor} \label{Strongly regular Neumaier graph counting}
Let $\Gamma$ be a strongly regular Neumaier Cayley graph with parameters $(n,k,\lambda,\mu;a,c)$. Then $(k-c+1)(k-\lambda-1)=(n-k-1)(\mu-a)$.
\end{cor}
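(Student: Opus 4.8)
The plan is a straightforward double count of the edges joining $S\setminus C$ to $G\setminus(S\cup\{e\})$, in the spirit of the derivations of \eqref{equation 1} and \eqref{equation 2}. First I would record the two relevant cardinalities: $|S\setminus C|=k-c+1$ (since $|S|=k$ and $|C\setminus\{e\}|=c-1$) and $|G\setminus(S\cup\{e\})|=n-k-1$.

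For the count from the $S\setminus C$ side, I would invoke the vertex-by-vertex analysis already carried out before Theorem \ref{Strongly regular Neumaier graph}: every vertex in $S\setminus C$ has $a-1$ neighbors in $C\setminus\{e\}$, $\lambda-a+1$ neighbors in $S\setminus C$, and hence exactly $k-\lambda-1$ neighbors in $G\setminus(S\cup\{e\})$. Summing over the $k-c+1$ vertices of $S\setminus C$ gives $(k-c+1)(k-\lambda-1)$ such edges. For the count from the other side, let $x\in G\setminus(S\cup\{e\})$. Then $x$ is at distance two from $e$, so $x$ and $e$ have exactly $\mu$ common neighbors, all lying in $S$. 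Among these, the ones in $C$ are precisely the neighbors of $x$ in $C$ (every element of $C$ other than $e$ is adjacent to $e$, and $x\not\sim e$ so none of them equals $e$), and there are $a$ of them since $C$ is a regular clique with nexus $a$ and $x\notin C$. Hence $x$ has exactly $\mu-a$ neighbors in $S\setminus C$ — this is the number $b$ from the remark preceding Theorem \ref{Strongly regular Neumaier graph}. Summing over the $n-k-1$ vertices of $G\setminus(S\cup\{e\})$ gives $(n-k-1)(\mu-a)$ edges, and equating the two counts yields the claimed identity.

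There is essentially no hard step here; the only point needing a moment's care is that a common neighbor of $e$ and $x$ lying in $C$ is automatically a neighbor of $x$ in $C$ (and conversely), which is immediate from $x\not\sim e$ together with $e\in C$ and $C$ a clique. The strong regularity hypothesis enters only to guarantee that $\mu$ (equivalently $b=\mu-a$) is the same for every $x\in G\setminus(S\cup\{e\})$, which is exactly the condition in Theorem \ref{Strongly regular Neumaier graph}; the identity then records the constancy of the edge count between $G\setminus(S\cup\{e\})$ and $S\setminus C$ already alluded to there.
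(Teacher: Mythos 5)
Your proof is correct and matches the paper's intended argument: the paper derives the identity by exactly this double count of edges between $S\setminus C$ and $G\setminus(S\cup\{e\})$, using that each vertex of $S\setminus C$ has $k-\lambda-1$ neighbors outside $S\cup\{e\}$ and each vertex outside has $b=\mu-a$ neighbors in $S\setminus C$. Your write-up just makes explicit the details the paper leaves to the reader.
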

\begin{rem} \label{nexus one}
Let $\Gamma$ be a Neumaier Cayley graph with parameters $(n,k,\lambda;1,c)$. Then $\lambda=c-2$ by Equation \ref{equation 1} and the regular induced subgraph on $S \setminus C$ has valency $c-2$. This implies that $k-c+1>c-2$ and therefore $k-2c+3>0$.
\end{rem}
\begin{thm} \label{The number of edges}
Let $\Gamma$ be a Neumaier Cayley graph with parameters $(n,k,\lambda;a,c)$. Then this graph has at least the following number of edges.
\begin{equation*}
k(k-\lambda)+(k-c+1)(a-1)+\frac{(k-c+1)(\lambda-a+1)}{2}+\frac{(c-1)(c-2)}{2}
\end{equation*}
Furthermore, if the Neumaier Cayley graph $\Gamma$ admits this bound, then it is a strongly regular Neumaier Cayley graph with parameters $(n,k,\lambda,k)$.
\end{thm}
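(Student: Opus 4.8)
The plan is to count $|E(\Gamma)|$ by partitioning the vertex set $G$ into the four cells $\{e\}$, $C\setminus\{e\}$, $S\setminus C$, and $G\setminus(S\cup\{e\})$ already displayed in Figure~\ref{Fig1}, and to determine the number of edges inside each cell and across each pair of cells. All of these counts except one are forced by the local data recorded just before Theorem~\ref{Strongly regular Neumaier graph}: the vertex $e$ sends $c-1$ edges into $C\setminus\{e\}$ and $k-c+1$ edges into $S\setminus C$; the cell $C\setminus\{e\}$ carries $\binom{c-1}{2}=\frac{(c-1)(c-2)}{2}$ internal edges; each vertex of $S\setminus C$ sends $a-1$ edges into $C\setminus\{e\}$, giving $(k-c+1)(a-1)$ such edges in total; the cell $S\setminus C$ carries $\frac{(k-c+1)(\lambda-a+1)}{2}$ internal edges; and every vertex of $C\setminus\{e\}$ and of $S\setminus C$ sends $k-\lambda-1$ edges into $G\setminus(S\cup\{e\})$, for a total of $\bigl((c-1)+(k-c+1)\bigr)(k-\lambda-1)=k(k-\lambda-1)$ edges across that cut. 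Since $e$ has no neighbour in $G\setminus(S\cup\{e\})$, the only quantity not yet pinned down is the number $m\ge 0$ of edges induced on $G\setminus(S\cup\{e\})$.

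Then I would simply add these contributions. Using the identity $k+k(k-\lambda-1)=k(k-\lambda)$, the total collapses to exactly the claimed expression plus $m$, and $m\ge 0$ yields the stated lower bound on the number of edges.

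For the equality case, note that attaining the bound forces $m=0$, i.e. the subgraph induced on $G\setminus(S\cup\{e\})$ is edgeless. Hence every vertex $v\in G\setminus(S\cup\{e\})$ has all $k$ of its neighbours inside $S$; since $v\not\sim e$, exactly $a$ of these lie in $C\setminus\{e\}$ by the nexus property, so exactly $k-a$ of them lie in $S\setminus C$. Thus every vertex of $G\setminus(S\cup\{e\})$ has the constant number $k-a$ of neighbours in $S\setminus C$, so Theorem~\ref{Strongly regular Neumaier graph} shows that $\Gamma$ is a strongly regular Neumaier Cayley graph; moreover, in the notation preceding that theorem one has $b=k-a$, whence $\mu=a+b=k$ and the parameters are $(n,k,\lambda,k)$.

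The whole argument is essentially bookkeeping, so the only point requiring care is checking that the four cells genuinely partition $G$ — which uses $C\setminus\{e\}\subseteq S$ — and that no edge is double counted; in particular one must remember that $e$ is adjacent to all of $S$, not merely to $C\setminus\{e\}$. Beyond this and the trivial simplification $k+k(k-\lambda-1)=k(k-\lambda)$, I do not expect any genuine obstacle.
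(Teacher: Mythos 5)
Your proposal is correct and follows essentially the same route as the paper: partition $G$ into $\{e\}$, $C\setminus\{e\}$, $S\setminus C$, $G\setminus(S\cup\{e\})$, sum the forced edge counts (with the same simplification $k+k(k-\lambda-1)=k(k-\lambda)$), and observe that equality forces the induced subgraph on $G\setminus(S\cup\{e\})$ to be edgeless, whence $\mu=k$. Your equality-case argument is if anything slightly more explicit than the paper's, since you route it through Theorem~\ref{Strongly regular Neumaier graph} with $b=k-a$ rather than asserting strong regularity directly.
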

\begin{proof}
With the above notation, the number of edges from the vertex $e$ into the set $S$ is $k$. Moreover, the number of edges from the vertices in the set $S$ into the set $G \setminus (S\cup\{e\})$ equals $k(k-\lambda -1)$. Furthermore, the number of edges from the vertices in the set $S$ into itself equals
\begin{equation*}
(k-c+1)(a-1)+\frac{(k-c+1)(\lambda-a+1)}{2}+\frac{(c-1)(c-2)}{2}
\end{equation*}
This implies that the number of edges is at least
\begin{equation*}
k(k-\lambda)+(k-c+1)(a-1)+\frac{(k-c+1)(\lambda-a+1)}{2}+\frac{(c-1)(c-2)}{2}.
\end{equation*}
Moreover, if the number of edges is exactly this number, then there is no edge in $G \setminus (S\cup\{e\})$ and therefore every vertex in $G \setminus (S\cup\{e\})$ is adjacent to $k$ vertices in the set $S$. This implies that this graph is indeed a strongly regular Neumaier Cayley graph with parameters $(n,k,\lambda,k;a,c)$ and this completes the proof.
\end{proof}
\begin{rem} \label{edge-regular with parameter}
Let $\Gamma$ be an edge-regular graph with parameter $(n,k,1)$. Then it is easy to see that this graph must have even degree since $\lambda=1$. Let $\Gamma$ be a Neumaier Cayley graph with diameter two and parameters $(n,k,\lambda;1,3)$. Then the parameter $\lambda=1$ by Remark \ref{nexus one} and this graph must have even degree.
\end{rem}
\begin{lem} \label{Special cases}
Let $\Gamma$ be a Neumaier Cayley graph with parameters $(n,k,\lambda;a,c)$. If $c \geq k$, then the graph $\Gamma$ is the cycle graph $C_{4}$.
\end{lem}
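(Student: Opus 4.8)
The plan is to bound the clique size against the valency, whittle the situation down to two extremal possibilities, and then eliminate all but one of them using Equation~\ref{equation 1} together with the degenerate-case remarks already in hand.

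The first step is the elementary observation that a clique of size $c$ in a $k$-regular graph forces $k\ge c-1$, hence $c\le k+1$; together with the hypothesis $c\ge k$ this leaves only $c=k+1$ and $c=k$. The case $c=k+1$ is disposed of immediately: every vertex of $C$ then spends all $k$ of its edges inside $C$, so no edge leaves $C$, which forces every vertex outside $C$ to have $0$ neighbours in $C$ --- impossible for a regular clique with nexus $a\ge 1$ --- while if there is no vertex outside $C$ at all the graph is $K_{k+1}$, contradicting that a Neumaier graph is non-complete. So $c=k$.

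The remaining work is the case $c=k$. Substituting $c=k$ into Equation~\ref{equation 1} gives $a-1=(k-1)(\lambda-k+2)$. Since two adjacent vertices of $C$ have the other $c-2=k-2$ vertices of $C$ as common neighbours, while no two adjacent vertices in a $k$-regular graph can share more than $k-1$ neighbours, we get $\lambda\in\{k-2,k-1\}$. If $\lambda=k-1$, the identity yields $a=k=c$, whence $\Gamma$ is complete by Remark~\ref{Complete graph}, a contradiction; so $\lambda=k-2$ and $a=1$. Now $|S\setminus C|=k-c+1=1$, so by Remark~\ref{nexus one} the graph induced on $S\setminus C$ is $(c-2)$-regular on a single vertex, which forces $c-2=0$; equivalently one may simply quote the inequality $k-2c+3>0$ of Remark~\ref{nexus one}, which with $c=k$ gives $k\le 2$ (and $\lambda=c-2\ge 0$ rules out $c\le 1$). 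Either way $c=k=2$, and Remark~\ref{Complete multipartite graph} applied with $c=2$ identifies $\Gamma$ with $K_{k,k}=K_{2,2}$, i.e.\ the cycle $C_4$.

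I do not anticipate a genuine obstacle: everything reduces to the clique bound $c\le k+1$ and the arithmetic of Equation~\ref{equation 1}. The only point needing a little care is organizing the chain of degenerate sub-cases --- $c=k+1$, then $\lambda=k-1$ (forcing $a=k$), then $k\le 2$ --- so that each is closed off by the appropriate earlier remark (\ref{Complete graph}, \ref{nexus one}, \ref{Complete multipartite graph}) rather than by an ad hoc computation.
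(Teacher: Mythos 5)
Your proof is correct and follows essentially the same route as the paper: reduce to the two cases $c=k+1$ and $c=k$, eliminate the first as forcing a complete graph, and in the second derive $a=1$ so that Remark~\ref{nexus one} gives $k<3$ and Remark~\ref{Complete multipartite graph} identifies the graph as $K_{2,2}=C_4$. The only (harmless) difference is bookkeeping: the paper gets $a=1$ from Lemma~\ref{Counting} together with Equation~\ref{equation 2}, whereas you get it from Equation~\ref{equation 1} combined with the bound $k-2\le\lambda\le k-1$ and Remark~\ref{Complete graph}.
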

\begin{proof}
If $ c > k$, then $c=k+1$ and $\lambda=k-1$. This implies that the graph $\Gamma$ is the complete graph on $n=k+1$ vertices by Equation \ref{equation 2} which is impossible. If $ c = k \geq 3$, then the parameter $\lambda$ equals $k-2$ ($\lambda=k-1$ is impossible by the previous argument). This implies that $k=(n-k)a$ by Lemma \ref{Counting} and $k-1=(n-k-1)a$ by Equation \ref{equation 2}. It follows that $a=1$ and $n=2k$. On the other hand, $k<3$ by Remark \ref{nexus one} which implies that $k=2$. Therefore the graph $\Gamma$ is the cycle graph $C_{4}$ and this completes the proof.
\end{proof}

Let $-m$ be the smallest eigenvalue of a strongly regular Neumaier graph with parameters $(n,k,\lambda,\mu;a,c)$. Then the regular clique $C$ admit the Hoffman bound. Namely, $c=1+\frac{k}{m}$ (cf. \cite[Proposition~1.3.2]{BCN}). Moreover, the nexus of the regular clique $C$ is $a=\frac{\mu}{m}$. This implies that $k$, $c-a-1$, and $-m=-\frac{\mu}{a}$ are the all three distinct eigenvalues of the strongly regular Neumaier graph by Remark \ref{eigenvalues}. Therefore we can conclude the following result.
\begin{prop} \label{Integer eigenvalues}
Let $\Gamma$ be a strongly regular Neumaier graph with parameters $(n,k,\lambda,\mu;a,c)$. Then it has integer eigenvalues $k$, $c-a-1$, and $-\frac{\mu}{a}$.
\end{prop}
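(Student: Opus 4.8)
The plan is to combine the two facts recalled just above the statement — that for a strongly regular Neumaier graph with smallest eigenvalue $-m$ one has $c = 1 + \frac{k}{m}$ and $a = \frac{\mu}{m}$ — with the elementary fact that every eigenvalue of the adjacency matrix of a graph is an algebraic integer. The only real content is to deduce that $m$ itself is a rational integer; after that the statement is bookkeeping together with Remark \ref{eigenvalues}.

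First I would note that the adjacency matrix $A$ of $\Gamma$ is an integer matrix, so its characteristic polynomial is monic with integer coefficients and hence every eigenvalue of $\Gamma$, in particular $-m$, is an algebraic integer. Since $C$ is a clique of size $c \ge 2$ we have $c - 1 \ge 1$, so the equality $c = 1 + \frac{k}{m}$ gives $m = \frac{k}{c-1} \in \mathbb{Q}$. A rational number that is an algebraic integer is a rational integer, so $m \in \mathbb{Z}$; consequently the smallest eigenvalue of $\Gamma$ is $-m = -\frac{k}{c-1} = -\frac{\mu}{a} \in \mathbb{Z}$, the last equality using $a = \frac{\mu}{m}$.

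Finally, by Remark \ref{eigenvalues} the numbers $k$ and $c - a - 1$ are eigenvalues of $\Gamma$, and both are integers by inspection. As $\Gamma$ is connected (it has diameter two or three), non-complete, and strongly regular, it has exactly three distinct eigenvalues; since $c - a - 1 < k$ (because $a \ge 1$) and $c - a - 1 \neq -m$ (otherwise $\Gamma$ would have at most two distinct eigenvalues and hence be complete), these three eigenvalues must be precisely $k$, $c - a - 1$, and $-\frac{\mu}{a}$, all of them integers. The one step that merits a moment's care is the passage from ``$m$ is rational'' to ``$m$ is an integer'': this is exactly the point where the usual dichotomy between conference graphs and graphs with integral spectrum is settled, and here it is the Hoffman-bound equality $c = 1 + \frac{k}{m}$ — forcing $m$ to divide $k$ — that rules the conference case out.
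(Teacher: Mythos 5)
Your proposal is correct and follows essentially the same route as the paper: the paper's proof is the paragraph preceding the proposition, which invokes the Hoffman-bound equality $c=1+\frac{k}{m}$ and $a=\frac{\mu}{m}$ for the regular clique together with Remark \ref{eigenvalues} to identify the three eigenvalues as $k$, $c-a-1$, and $-m=-\frac{\mu}{a}$. Your only addition is to spell out the (correct, and in the paper left implicit) step that $m=\frac{k}{c-1}$ is a rational algebraic integer and hence an integer, which rules out the conference-graph case.
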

\section{Neumaier Cayley graphs and Schur rings}
There exist several examples of Neumaier Cayley graphs in literature, for example the construction by Greaves and Koolen in \cite{GK}. We note that a Neumaier Cayley graph has diameter two (see Observation \ref{Diameter 2} below). In this section, we use Schur ring to study Neumaier Cayley graphs. We also use the results of \S \ref{Section diameter two} to characterize Neumaier Cayley graphs with small valency at most $10$. We also distinguish between strongly regular Neumaier Cayley graph and strictly Neumaier Cayley graph and give a necessary and sufficient condition under which a Neumaier Cayley graph is a strongly regular Neumaier Cayley graph.

Let $G$ be a group and $R$ a commutative ring with identity. Then the group algebra $RG$ consists of the elements of form $\sum_{g \in G}a_{g}g$, where $a_{g} \in R$, with the following operations.
\begin{equation*}
\sum_{g \in G}a_{g}g+\sum_{g \in G}b_{g}g=\sum_{g \in G}(a_{g}+b_{g})g,
\end{equation*}
\begin{equation*}
(\sum_{g \in G}a_{g}g)(\sum_{h \in G}b_{h}h)=\sum_{g,h \in G}(a_{g}b_{h})gh,
\end{equation*}
and the scaler multiplication
\begin{equation*}
c\sum_{g \in G}a_{g}g=\sum_{g \in G}(ca_{g})g.
\end{equation*}
Let $T$ be a subset of the group $G$. Then the element $\sum_{t \in T}t$, in this algebra, is denoted by $\overline{T}$. Moreover, for every element $a=\sum_{g \in G}a_{g}g$, the element $\sum_{g \in G}a_{g}g^{-1}$ is denoted by $a^{(-1)}$. Let $\{T_{0},T_{1},\ldots,T_{d}\}$, where $T_{0}=\{e\}$, be the partition of the group $G$. Then the subalgebra $\mathcal{S}$ generated by $\alpha=\{\overline{T_{0}},\overline{T_{1}},\ldots,\overline{T_{d}}\}$ such that the $R$-module $\mathcal{S}$ is generated by $\alpha$ with the property that $a^{(-1)} \in \mathcal{S}$ for every element $a \in \mathcal{S}$ is called Schur ring over the ring $R$, and in this case $\alpha$ is called the simple basis of the Schur ring $\mathcal{S}$.

Let $Cay(G,S)$ be a Cayley graph with a regular clique $C$ and nexus $a$. Then we can suppose that the identity element $e \in C$ and $ C \setminus \{e\} \subseteq S$ because if $x \in C$, then $Cx^{-1}$ is also a regular clique with nexus $a$. In the sequel, we can suppose that the regular clique $C$ contains the identity element $e$. We observe the following identity in the group algebra $\mathbb{Z}G$.
\begin{equation} \label{Equation 1 Cayley}
\overline{S} \, \overline{C}=a\overline{G \setminus C}+(|C|-1)\overline{C}.
\end{equation}
Therefore we can conclude the following observation and proposition.
\begin{Observation} \label{Diameter 2}
A non-complete connected Cayley graph with a regular clique has diameter two.
\end{Observation}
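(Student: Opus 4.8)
The plan is to translate the group-algebra identity (\ref{Equation 1 Cayley}) into the purely set-theoretic statement $SC=G$, and then to use the inclusion $C\subseteq\{e\}\cup S$ to deduce $G\subseteq S\cup S^{2}$; the latter says exactly that $e$ — and hence, by vertex-transitivity of Cayley graphs, every vertex — lies at distance at most two from any other vertex, which together with non-completeness pins the diameter down to two.

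First I would dispose of two degenerate cases forced by the hypotheses. Since $\Gamma$ is non-complete, $C$ is a proper subset of $G$, so there is at least one vertex outside $C$; if the nexus were $a=0$, then no vertex outside $C$ would be adjacent to $C$, so the connected component of $e$ would stay inside $C$, contradicting connectedness. Hence $a\ge 1$. Similarly $C=\{e\}$ would force either $S=G\setminus\{e\}$ (making $\Gamma$ complete) or $S=\emptyset$ (making $\Gamma$ disconnected), so $|C|\ge 2$. Consequently the right-hand side $a\,\overline{G\setminus C}+(|C|-1)\,\overline{C}$ of (\ref{Equation 1 Cayley}) has a strictly positive coefficient at every element of $G$: the coefficient is $a\ge 1$ on $G\setminus C$ and $|C|-1\ge 1$ on $C$.

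Next I would compare supports in $\mathbb{Z}G$. All coefficients occurring in the product $\overline{S}\,\overline{C}$ are nonnegative, so the support of $\overline{S}\,\overline{C}$ is exactly the product set $SC=\{sx:\ s\in S,\ x\in C\}$; by the previous step this support is all of $G$, i.e. $SC=G$. Since we may assume the regular clique contains $e$, we have $C\setminus\{e\}\subseteq S$, hence $C\subseteq\{e\}\cup S$, and therefore
\[
G=SC\subseteq S(\{e\}\cup S)=S\cup S^{2}.
\]
Now every $g\in S$ is adjacent to $e$, while if $g=s_{1}s_{2}$ with $s_{1},s_{2}\in S$, then $e\sim s_{2}$ (because $s_{2}^{-1}\in S$) and $g\sim s_{2}$ (because $gs_{2}^{-1}=s_{1}\in S$), so $g$ is at distance at most two from $e$. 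Vertex-transitivity then yields $\mathrm{diam}(\Gamma)\le 2$, while non-completeness yields $\mathrm{diam}(\Gamma)\ge 2$, so $\mathrm{diam}(\Gamma)=2$.

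I do not expect a genuine obstacle: the entire content is the step from (\ref{Equation 1 Cayley}) to $SC=G$, which is immediate once one notes that the right-hand side has full support. The only point needing care is the clean disposal of the boundary cases $a=0$ and $|C|=1$ before invoking the identity, which is why I treat them at the outset.
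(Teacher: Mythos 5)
Your proposal is correct and follows essentially the same route as the paper, which derives the observation directly from the identity $\overline{S}\,\overline{C}=a\overline{G\setminus C}+(|C|-1)\overline{C}$: since the right-hand side has full support, $SC=G$, and with $C\subseteq\{e\}\cup S$ this places every vertex within distance two of $e$, whence vertex-transitivity and non-completeness give diameter exactly two. Your explicit treatment of the degenerate cases $a=0$ and $|C|=1$ merely spells out what the paper leaves implicit (it assumes nexus $a\ge 1$ from the outset).
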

\begin{prop} \label{Cayley graphs with a regular clique}
Let $Cay(G,S)$ be a non-complete connected Cayley graph over the group $G$ with the connection set $S$. Then the Cayley graph $Cay(G,S)$ has a regular clique $C$ with nexus $a$ if and only if the Cayley graph $Cay(G,S)$ has diameter two and
\begin{equation*}
\overline{S} \, \overline{C}=a\overline{G \setminus C}+(|C|-1)\overline{C},
\end{equation*}
where $a$ is a natural number.
\end{prop}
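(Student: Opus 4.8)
The plan is to prove both implications by interpreting, and then comparing, the coefficients in $\mathbb{Z}G$ of the two sides of the displayed identity. The key preliminary observation is purely combinatorial: for an arbitrary subset $C\subseteq G$ and any $g\in G$, the coefficient of $g$ in $\overline{S}\,\overline{C}=\sum_{s\in S}\sum_{c\in C}sc$ is the number of pairs $(s,c)$ with $sc=g$, i.e. $\bigl|\{c\in C:\ gc^{-1}\in S\}\bigr|$. Since $gc^{-1}\in S$ holds exactly when $g$ and $c$ are adjacent in $Cay(G,S)$, and this forces $g\neq c$ because $e\notin S$, the coefficient of $g$ in $\overline{S}\,\overline{C}$ equals the number of neighbours of $g$ lying in $C$. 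Consequently, the identity $\overline{S}\,\overline{C}=a\overline{G\setminus C}+(|C|-1)\overline{C}$ is equivalent to the conjunction of the two statements: (i) every $g\in C$ has $|C|-1$ neighbours in $C$; (ii) every $g\in G\setminus C$ has exactly $a$ neighbours in $C$.

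For the forward direction, assume $Cay(G,S)$ has a regular clique $C$ with nexus $a$. Diameter two then follows from Observation \ref{Diameter 2}. Moreover (i) holds because $C$ is a clique, so each of its vertices is adjacent to all $|C|-1$ of the remaining vertices of $C$, and (ii) holds by definition of a regular clique with nexus $a$; hence the identity holds by the preliminary observation. Conversely, assume $Cay(G,S)$ has diameter two and the displayed identity holds for some subset $C$ and natural number $a$. Reading off the coefficient of each $g\in C$ gives that $g$ is adjacent to every other vertex of $C$, so $C$ is a clique; reading off the coefficient of each $g\in G\setminus C$ gives that $g$ has exactly $a$ neighbours in $C$, so $C$ is a regular clique with nexus $a$.

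The only delicate point is the bookkeeping in the coefficient computation: one must track the direction of multiplication (distinguishing $g$ from $g^{-1}$ and left from right translates), use $e\notin S$ and the inverse-closedness of $S$ at the right moments, and treat the two cases $g\in C$ and $g\notin C$ separately. Beyond that the argument is a direct dictionary between the group-algebra identity and the defining properties of a regular clique. Note that the diameter-two clause is in fact not used in the converse — it is forced anyway — and appears in the statement only so that, together with Observation \ref{Diameter 2}, the result reads as a clean characterization.
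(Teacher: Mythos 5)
Your proof is correct and matches the paper's (largely implicit) argument: the paper simply records the identity $\overline{S}\,\overline{C}=a\overline{G\setminus C}+(|C|-1)\overline{C}$ as the group-algebra translation of ``$C$ is a clique and every outside vertex has $a$ neighbours in $C$,'' which is exactly your coefficient dictionary. Your observation that the diameter-two clause is redundant in the converse (it follows from the existence of the regular clique via Observation \ref{Diameter 2}) is also accurate.
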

\begin{lem} \label{regular clique of Size 3 Cayley}
Let $Cay(G,S)$ be a connected Cayley graph over the abelian group $G$. Let $C$ be the regular clique of size $3$ and nexus one containing the identity element $e$ of the group $G$. Then $C$ is a subgroup of order $3$ in the abelian group $G$.
\end{lem}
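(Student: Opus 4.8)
The plan is to write $C=\{e,s,t\}$ with $s,t\in S$ distinct and different from $e$, and to pin down $s$ and $t$ using only the nexus condition $a=1$ together with commutativity. The single input I would use is the reformulation of the clique/nexus data recorded in Equation~\eqref{Equation 1 Cayley}: for $a=1$ and $|C|=3$ it reads $\overline{S}\,\overline{C}=\overline{G}+\overline{C}$ in $\mathbb{Z}G$, and comparing the coefficient of a fixed $g\in G$ says that $g$ has exactly two neighbours in $C$ when $g\in C$ and exactly one neighbour in $C$ when $g\notin C$ (the second statement being the nexus condition, the first the clique condition).

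First I would show $t=s^{-1}$. Consider the element $st\in G$. If $st\notin C$, then since $G$ is abelian $(st)s^{-1}=t\in S$ and $(st)t^{-1}=s\in S$, so $st$ is adjacent to both $s$ and $t$; this gives $st$ at least two neighbours in $C$, contradicting $a=1$. Hence $st\in C$; now $st=s$ forces $t=e$ and $st=t$ forces $s=e$, both excluded, so $st=e$, i.e. $t=s^{-1}$ and $C=\{e,s,s^{-1}\}$. Because $|C|=3$ we have $s\ne s^{-1}$, so $s$ has order at least $3$.

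Next I would show $s^3=e$. The clique condition $s\sim t$ now reads $st^{-1}=s\cdot s=s^2\in S$. If $s^2\notin C$, then from $s^2\in S$ (adjacency to $e$) and $s^2s^{-1}=s\in S$ (adjacency to $s$) the vertex $s^2$ has at least two neighbours in $C$, again contradicting $a=1$. So $s^2\in C=\{e,s,s^{-1}\}$; here $s^2=e$ is impossible (order at least $3$) and $s^2=s$ is impossible ($s\ne e$), hence $s^2=s^{-1}$, i.e. $s^3=e$. Therefore $C=\{e,s,s^2\}=\seq{s}$ is a subgroup of order $3$, as required.

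I do not expect a genuine obstacle: the whole proof is a short double count that uses commutativity only to rewrite $(st)s^{-1}=t$ and $st^{-1}=s^2$, together with the fact that a vertex outside $C$ sees exactly one vertex of $C$. The only points needing care are the degenerate coincidences of $st$ or $s^2$ with $e$, $s$, or $t$, and these are ruled out precisely by $|C|=3$ and $s,t\ne e$; connectedness of $Cay(G,S)$ is part of the standing hypotheses but is not actually needed for this particular statement.
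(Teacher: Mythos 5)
Your proof is correct and follows essentially the same strategy as the paper's: a product of two clique elements is (by commutativity) adjacent to at least two vertices of $C$, so the nexus-one condition forces that product into $C$, after which the degenerate coincidences with $e$, $s$, $t$ are eliminated. Your version is in fact slightly more complete, since your first step ($st\in C$, hence $t=s^{-1}$) treats an arbitrary clique $\{e,s,t\}$, whereas the paper assumes at the outset that $C\setminus\{e\}$ is inverse-closed and only splits into the cases $C=\{e,a,a^{-1}\}$ and two involutions.
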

\begin{proof}
Let $C$ be a regular clique $C$ of size $3$ and nexus one and $C=\{e,a,a^{-1}\}$. Then there exist two cases. Firstly, $a^{2} \in S \setminus C$ and $a^{2}$ is adjacent to $e$ and $a$, a contradiction with the nexus of the regular clique $C$. Secondly, $a^{2}=a^{-1}$ and $C=\{e,a,a^{-1}\}$ is a subgroup of order $3$ as desired. If $C=\{e,a,b\}$, where $a$ and $b$ are two involutions, then $ab=ba$ is adjacent to both $a$ and $b$, a similar contradiction with the nexus of the regular clique $C$ and this completes the proof.
\end{proof}
\begin{rem} \label{Subgroup regular code}
Let $Cay(G,S)$ be a Connected Cayley graph with a regular clique $C$ and nexus $a$. If $C$ is a subgroup of the group $G$, then $|S|=([G:C]-1)a+|C|-1$ because each of the right cosets of $C$ in $G$ is also a regular clique with nexus $a$ and $C \setminus \{e\} \subset S$. We also conclude that $\overline{S \setminus C} \, \overline{C}=a\overline{G \setminus C}$ by Equation \ref{Equation 1 Cayley}.
\end{rem}
\subsection{Strongly regular Neumaier Cayley graphs}
Let $Cay(G,S)$ be a strongly regular Cayley graph with parameters $(|G|,|S|,\lambda,\mu)$. Then the connection set $S$ is a partial difference set in the group $G$ and the following identity holds (cf. \cite[Proposition~1.1 and Theorem~1.3]{MA}).
\begin{equation*}
\overline{S}^{2}=\mu\overline{G}+(\lambda-\mu)\overline{S}+(|S|-\mu)e.
\end{equation*}
If the strongly regular Cayley graph $Cay(G,S)$ has a regular clique $C$ containing the identity element $e$ with nexus $a$, then by Equation \ref{Equation 1 Cayley} we can conclude the following proposition.
\begin{prop}
Let $Cay(G,S)$ be a Cayley graph over the group $G$ with the connection set $S$. Then $Cay(G,S)$ is a strongly regular Neumaier Cayley graph with parameters $(|G|,|S|,\lambda,\mu;a,c)$ and the regular clique $C$ of size $c$ and nexus $a$ containing the identity element of the group $G$ if and only if
\begin{equation*}
\overline{S}^{2}=\mu\overline{G}+(\lambda-\mu)\overline{S}+(|S|-\mu)e,
\end{equation*}
\begin{equation*}
\overline{S} \, \overline{C}=a\overline{G \setminus C}+(c-1)\overline{C},
\end{equation*}
for some non-negative integers $\lambda$, $\mu$, $a$, and $c$.
\end{prop}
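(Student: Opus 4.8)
The plan is to read off both implications from two facts already assembled above: the partial difference set description of strongly regular Cayley graphs quoted from \cite{MA} immediately before the statement, and Proposition~\ref{Cayley graphs with a regular clique}, which detects a regular clique in a Cayley graph through the identity $\overline{S}\,\overline{C}=a\overline{G\setminus C}+(c-1)\overline{C}$. In short, the first displayed equation carries strong regularity and the second carries the regular clique, and since these two features are logically independent the proof is mostly a matter of combining the two characterizations.

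First I would do the forward direction. Suppose $Cay(G,S)$ is a strongly regular Neumaier Cayley graph with parameters $(|G|,|S|,\lambda,\mu;a,c)$ and regular clique $C$ of size $c$ and nexus $a$ with $e\in C$. Being strongly regular with parameters $(|G|,|S|,\lambda,\mu)$, its connection set $S$ is a partial difference set, so the first displayed identity holds by \cite[Proposition~1.1 and Theorem~1.3]{MA}. Since $C$ is a regular clique of size $c$ and nexus $a$ containing $e$, the second displayed identity is exactly Equation~\ref{Equation 1 Cayley} (equivalently, the forward direction of Proposition~\ref{Cayley graphs with a regular clique}). Hence both equations hold.

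For the converse I would assume the two displayed identities. The first identity is precisely the partial difference set condition, so by the converse part of \cite[Proposition~1.1 and Theorem~1.3]{MA} the Cayley graph $Cay(G,S)$ is strongly regular with parameters $(|G|,|S|,\lambda,\mu)$, in particular edge-regular with parameters $(|G|,|S|,\lambda)$. Granting that the data are non-degenerate (so that the graph is non-complete and connected, see the next paragraph), $Cay(G,S)$ then has diameter two, and substituting the second identity into Proposition~\ref{Cayley graphs with a regular clique} produces a regular clique $C$ of size $c$ and nexus $a$ containing $e$. Thus $Cay(G,S)$ is a non-complete edge-regular Cayley graph with a regular clique, i.e.\ a Neumaier Cayley graph, and it is strongly regular, so it is a strongly regular Neumaier Cayley graph with parameters $(|G|,|S|,\lambda,\mu;a,c)$.

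The main obstacle is not the algebra but this non-degeneracy: the two identities on their own do not exclude the complete graph, since for $S=G\setminus\{e\}$ the first identity holds with $\lambda=|G|-2$ and arbitrary $\mu$, while the second holds with $a=c$. To stay inside the class of Neumaier graphs one must use the restrictions implicit in the statement, namely that $C$ is a proper subset and $a$ is a positive integer with $a<c$: if $a=0$ the second identity forces every vertex outside $C$ to have no neighbor in $C$, which with connectedness gives $C=G$; if $a=c$ the graph is complete by Remark~\ref{Complete graph}; and a non-complete strongly regular graph that is disconnected is a disjoint union of cliques, again forcing $a=0$. Ruling these out leaves exactly $\mu\ge 1$ and a non-complete connected graph of diameter two, after which the argument above goes through verbatim. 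I expect reconciling the phrase ``non-negative integers'' in the statement with these genuine-Neumaier restrictions to be the one point needing care.
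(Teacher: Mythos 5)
Your proposal is correct and follows essentially the same route as the paper, which simply derives the proposition by combining the partial difference set identity from \cite[Proposition~1.1 and Theorem~1.3]{MA} with Equation~\ref{Equation 1 Cayley} (equivalently Proposition~\ref{Cayley graphs with a regular clique}) and offers no further written argument. Your additional care about the degenerate cases (completeness, disconnectedness, $a=0$ versus $a=c$) addresses a point the paper leaves implicit but does not change the substance of the argument.
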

This implies that
\begin{equation*}
\overline{S}^{2} - \overline{S} \, \overline{C}=\overline{S}(\overline{S} - \overline{C})=\overline{S} \, \overline{S \setminus C}=
\end{equation*}
\begin{equation} \label{Equation 2 Cayley}
(\mu-a)\overline{G \setminus (S\cup\{e\})}+(\lambda-a)\overline{S \setminus C}+(\lambda-|C|+1)\overline{C \setminus \{e\}}+(|S|-|C|+1)e.
\end{equation}
Therefore by Theorem \ref{Strongly regular Neumaier graph}, we can conclude the following characterization for a Neumaier Cayley graph to be a strongly regular Neumaier Cayley graph.
\begin{thm}
Let $Cay(G,S)$ be a Neumaier Cayley graph with parameters $(|G|,|S|,\lambda;a,c)$ and the regular clique $C$ of size $c$ and nexus $a$ containing the identity element of the group $G$. Then the Cayley graph $Cay(G,S)$ is a strongly regular Neumaier Cayley graph with parameters $(|G|,|S|,\lambda,\mu;a,c)$ if and only if
\begin{equation*}
\overline{S} \, \overline{S \setminus C}=
\end{equation*}
\begin{equation*}
(\mu-a)\overline{G \setminus (S\cup\{e\})}+(\lambda-a)\overline{S \setminus C}+(\lambda-|C|+1)\overline{C \setminus \{e\}}+(|S|-|C|+1)e,
\end{equation*}
for a non-negative integer $\mu$.
\end{thm}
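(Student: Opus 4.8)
The plan is to prove the two implications separately: the forward one is immediate from the computation behind Equation~\ref{Equation 2 Cayley}, and the converse follows from Theorem~\ref{Strongly regular Neumaier graph} once a single coefficient of the asserted identity is read off.

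For the ``only if'' direction, suppose $Cay(G,S)$ is a strongly regular Neumaier Cayley graph with parameters $(|G|,|S|,\lambda,\mu;a,c)$. Then the connection set $S$ is a partial difference set, so the identity $\overline{S}^{2}=\mu\overline{G}+(\lambda-\mu)\overline{S}+(|S|-\mu)e$ recalled above holds, while Equation~\ref{Equation 1 Cayley} gives $\overline{S}\,\overline{C}=a\overline{G\setminus C}+(c-1)\overline{C}$ because $C$ is a regular clique of nexus $a$ containing the identity. Combining these two identities and re-expanding over the partition $\{e\}$, $C\setminus\{e\}$, $S\setminus C$, $G\setminus(S\cup\{e\})$ of $G$ is precisely the manipulation producing Equation~\ref{Equation 2 Cayley}, which is the displayed identity with this value of $\mu$, a non-negative integer. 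So this direction has essentially been carried out already.

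The content is the converse, so assume the displayed identity holds for some non-negative integer $\mu$. I would first record the translation between products in $\mathbb{Z}G$ and local neighbourhood counts: for any subset $T\subseteq G$ and any $g\in G$, the coefficient of $g$ in $\overline{S}\,\overline{T}$ is the number of pairs $(s,t)\in S\times T$ with $st=g$, hence the number of $t\in T$ with $gt^{-1}\in S$; since $S=S^{-1}$ this equals $|\{t\in T : tg^{-1}\in S\}|$, and as $t\sim g\iff tg^{-1}\in S$ it is the number of neighbours of $g$ lying in $T$. Taking $T=S\setminus C$ and comparing, in the displayed identity, the coefficients of the elements of $G\setminus(S\cup\{e\})$, I would conclude that every vertex of $G\setminus(S\cup\{e\})$ has precisely $\mu-a$ neighbours in $S\setminus C$. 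In particular that number is constant over $G\setminus(S\cup\{e\})$, so Theorem~\ref{Strongly regular Neumaier graph} applies and shows that $Cay(G,S)$ is a strongly regular Neumaier Cayley graph; moreover, by the argument preceding that theorem its $\mu$-parameter equals $a+(\mu-a)=\mu$, which matches the notation, so its parameters are $(|G|,|S|,\lambda,\mu;a,c)$ as claimed.

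The one place that needs care is the coefficient bookkeeping in $\mathbb{Z}G$ for a possibly non-abelian $G$, in particular keeping the identification of ``coefficient of $g$ in $\overline{S}\,\overline{T}$'' with ``number of neighbours of $g$ in $T$'' straight by fixing the convention $x\sim y\iff xy^{-1}\in S$ throughout and using $S=S^{-1}$. Once that dictionary is in place, the ``if'' direction uses only the block of the identity supported on $G\setminus(S\cup\{e\})$ (exactly the data that Theorem~\ref{Strongly regular Neumaier graph} turns into strong regularity), and the ``only if'' direction is the algebra already performed for Equation~\ref{Equation 2 Cayley}; no further estimates or case analysis are required.
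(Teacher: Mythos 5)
Your route is the same as the paper's: the ``only if'' direction is the subtraction $\overline{S}^{2}-\overline{S}\,\overline{C}$ using the partial-difference-set identity together with \eqref{Equation 1 Cayley}, and the ``if'' direction reads off one coefficient and invokes Theorem \ref{Strongly regular Neumaier graph}. Your converse half is correct and is the real content: under the convention $x\sim y\iff xy^{-1}\in S$ the coefficient of $g$ in $\overline{S}\,\overline{T}$ is the number of neighbours of $g$ in $T$, so the displayed identity forces every vertex of $G\setminus(S\cup\{e\})$ to have exactly $\mu-a$ neighbours in $S\setminus C$, and Theorem \ref{Strongly regular Neumaier graph} together with the discussion preceding it gives strong regularity with $\mu$-parameter $a+(\mu-a)=\mu$.

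There is, however, one step that would fail if you actually carried it out: your claim that re-expanding $\overline{S}^{2}-\overline{S}\,\overline{C}$ over the partition ``is the displayed identity.'' Since $e\in C$, $e\notin S$ and $C\setminus\{e\}\subseteq S$, one has $\overline{S}-\overline{C}=\overline{S\setminus C}-e$, hence $\overline{S}^{2}-\overline{S}\,\overline{C}=\overline{S}\,\overline{S\setminus C}-\overline{S}$, and the expansion actually yields
$\overline{S}\,\overline{S\setminus C}=(\mu-a)\overline{G\setminus(S\cup\{e\})}+(\lambda-a+1)\overline{S\setminus C}+(\lambda-|C|+2)\overline{C\setminus\{e\}}+(|S|-|C|+1)e$,
with the coefficients on $\overline{S\setminus C}$ and $\overline{C\setminus\{e\}}$ each one larger than in the statement. (This agrees with the direct neighbourhood counts behind Figure \ref{Fig1}: a vertex of $S\setminus C$ has $\lambda-a+1$ neighbours in $S\setminus C$, and a vertex of $C\setminus\{e\}$ has $\lambda-|C|+2$ there; so the identity as displayed can never hold for a genuine Neumaier Cayley graph.) You inherit this off-by-one by deferring to \eqref{Equation 2 Cayley}, where the same slip occurs. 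It does not damage the logic of the equivalence --- the only coefficient either direction actually uses is the one on $\overline{G\setminus(S\cup\{e\})}$, which is correct --- but as written your forward direction establishes a different identity from the one you are asked to prove, so the statement (and your proof) need the two middle coefficients corrected.
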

\begin{exam} \label{Example}
The complete multipartite graph $K_{\underbrace{m,m,\ldots,m}_{n-times}}$ is a strongly regular Neumaier graph with parameters $(nm,(n-1)m,(n-2)m,(n-1)m;n,n-1)$. It is also a Cayley graph over the group $G=\mathbb{Z}_{m}^{n}$ with the connection set $S=G \setminus H$, where $H$ is a subgroup of the group $G$ of order $m$ (cf. \cite[Prop.~2.6]{AJ}). The lattice graph $L_{2}(n,n)$ is the line graph of the complete bipartite graph $K_{n,n}$. It is a strongly regular Neumaier graph with parameters $(n^{2},2(n-1),n-2,2;n,1)$. It is also a Cayley graph over the group $\mathbb{Z}_{n} \times \mathbb{Z}_{n}$ with the connection set $\{(0,1),(0,2),\ldots,(0,n-1),(1,0),(2,0),\ldots,(n-1,0)\}$ (cf. \cite[Thm.~4.7]{ADJ}).
\end{exam}
Let $G$ be an abelian group and $C$ be a subgroup of the group $G$ that is a regular clique with nexus $a$. Then by Equation \ref{Equation 2 Cayley} we can conclude that
\begin{equation*}
\overline{S \setminus C} \, \overline{S \setminus C}=
\end{equation*}
\begin{equation*}
(\mu-2a)\overline{G \setminus (S\cup\{e\})}+(\lambda-2a)\overline{S \setminus C}+(\lambda-|C|+1)\overline{C \setminus \{e\}}+(|S|-|C|+1)e
\end{equation*}
since $\overline{S \setminus C} \, \overline{C}=a\overline{G \setminus C}$.
On the other hand, the complement of a strongly regular graph with parameters $(n,k,\lambda,\mu)$ is also a strongly regular graph with parameters $(n,n-k-1,n-2k-2+\mu,n-2k+\lambda)$. Therefore the following identity holds (cf. \cite[Proposition~1.1 and Theorem~1.3]{MA}).
\begin{equation*}
\overline{G \setminus (S\cup\{e\})}^{2}=(|G|-2|S|+\lambda)\overline{G}+(\mu- \lambda -2)\overline{G \setminus (S\cup\{e\})}+(|S|-1-\lambda)e.
\end{equation*}
On the other hand, the regular clique $C$ in the strongly regular Cayley graph $Cay(G,S)$ can be considered as an independent set in its complement in such a way that every vertex out of this independent set has $|C|-a$ adjacent vertex in $C$. It follows that
\begin{equation*}
\overline{G \setminus (S\cup\{e\})} \, \overline{C}=(|C|-a)\overline{G \setminus C}.
\end{equation*}
On the other hand,
\begin{equation*}
(|G|-|S|-1)\overline{G}=\overline{G \setminus (S\cup\{e\})} \, \overline{G}=\overline{G \setminus (S\cup\{e\})} \, (\overline{C}+\overline{S \setminus C}+\overline{G \setminus (S\cup\{e\})}).
\end{equation*}
This implies that
\begin{equation*}
\overline{G \setminus (S\cup\{e\})} \, \overline{S \setminus C}=
\end{equation*}
\begin{equation*}
(|S|-\mu-|C|+a+1)\overline{G \setminus (S\cup\{e\})}+(|S|-|C|-\lambda+a-1)\overline{S \setminus C}+(|S|-\lambda-1)\overline{C \setminus \{e\}}.
\end{equation*}
Therefore we can conclude the following proposition.
\begin{prop}
Let $Cay(G,S)$ be a strongly regular Neumaier Cayley graph over the abelian group $G$ with parameters $(|G|,|S|,\lambda,\mu;a,c)$. Let $C$ be a subgroup of the group $G$ that is a regular clique with nexus $a$. Then the subalgebra generated by $\{\{e\},C \setminus \{e\},S \setminus C, G \setminus S\}$ is a Schur ring in the group ring $\mathbb{Z}G$.
\end{prop}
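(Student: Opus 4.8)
The plan is to verify directly the three requirements in the paper's definition of a Schur ring for the four sets
\[
T_{0}=\{e\},\quad T_{1}=C\setminus\{e\},\quad T_{2}=S\setminus C,\quad T_{3}=G\setminus(S\cup\{e\}).
\]
Since $e\notin S$ we have $\overline{G\setminus S}=e+\overline{T_{3}}$, so the subalgebra in the statement is the same as the one generated by $\{e,\overline{T_{1}},\overline{T_{2}},\overline{T_{3}}\}$, and it suffices to work with the partition $\{T_{0},T_{1},T_{2},T_{3}\}$. That this is a partition of $G$ with $T_{0}=\{e\}$ is clear because $C\setminus\{e\}\subseteq S$ (as $C$ is a clique through $e$) and every element outside $S\cup\{e\}$ lies outside $C$ as well. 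Closure under $x\mapsto x^{(-1)}$ is immediate from the hypotheses: $G$ is abelian, $S$ is inverse-closed, and $C$ is a subgroup, so each $T_{i}$ is inverse-closed and $\overline{T_{i}}^{(-1)}=\overline{T_{i}}$.

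The substance of the argument is to show that the $\mathbb{Z}$-module $\mathcal{S}$ spanned by $e,\overline{T_{1}},\overline{T_{2}},\overline{T_{3}}$ is closed under multiplication, i.e. equals the subalgebra it generates. By bilinearity and commutativity only the six products
\[
\overline{T_{1}}^{2},\quad \overline{T_{1}}\,\overline{T_{2}},\quad \overline{T_{1}}\,\overline{T_{3}},\quad \overline{T_{2}}^{2},\quad \overline{T_{2}}\,\overline{T_{3}},\quad \overline{T_{3}}^{2}
\]
need be treated (the products involving $T_{0}$ are trivial), and all of them can be extracted from identities already in hand. From $\overline{C}^{2}=c\,\overline{C}$ (valid since $C$ is a subgroup of order $c$) and $\overline{T_{1}}=\overline{C}-e$ one gets $\overline{T_{1}}^{2}=(c-2)\overline{T_{1}}+(c-1)e$; from Remark~\ref{Subgroup regular code}, which gives $\overline{T_{2}}\,\overline{C}=a\,\overline{G\setminus C}$, together with $\overline{G\setminus C}=\overline{T_{2}}+\overline{T_{3}}$, one gets $\overline{T_{1}}\,\overline{T_{2}}=(a-1)\overline{T_{2}}+a\,\overline{T_{3}}$; and from the identity $\overline{G\setminus(S\cup\{e\})}\,\overline{C}=(c-a)\overline{G\setminus C}$ recorded just before the proposition one gets $\overline{T_{1}}\,\overline{T_{3}}=(c-a)\overline{T_{2}}+(c-a-1)\overline{T_{3}}$. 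The remaining three products are exactly the expansions of $\overline{S\setminus C}\,\overline{S\setminus C}$, of $\overline{G\setminus(S\cup\{e\})}\,\overline{S\setminus C}$, and of $\overline{G\setminus(S\cup\{e\})}^{2}$ already computed in this subsection; in the last of these I would substitute $\overline{G}=e+\overline{T_{1}}+\overline{T_{2}}+\overline{T_{3}}$. With these six identities in place $\mathcal{S}$ is a subalgebra, and together with the partition property and closure under $(-1)$ this exhibits $\mathcal{S}$ as a Schur ring over $\mathbb{Z}G$ with simple basis $\{e,\overline{T_{1}},\overline{T_{2}},\overline{T_{3}}\}$.

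I expect no genuine obstacle: the proof is essentially an assembly of relations proved earlier, and the one place where the hypotheses really enter is precisely where those relations were derived — strong regularity gives the partial-difference-set identity for $\overline{S}$ and the strong regularity of the complement (hence the formulas for $\overline{T_{2}}^{2}$, $\overline{T_{2}}\,\overline{T_{3}}$, $\overline{T_{3}}^{2}$); ``$C$ a subgroup'' gives $\overline{C}^{2}=c\,\overline{C}$ and $\overline{T_{2}}\,\overline{C}=a\,\overline{G\setminus C}$; and abelianness is what allows multiplying the defining relation of $\overline{S}$ by $\overline{C}$ on either side interchangeably. The only care needed is the bookkeeping of re-expressing each product in the basis $\{e,\overline{T_{1}},\overline{T_{2}},\overline{T_{3}}\}$ with the correct integer coefficients.
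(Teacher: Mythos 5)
Your proposal is correct and follows essentially the same route as the paper: the paper's ``proof'' of this proposition is precisely the sequence of displayed identities derived in that subsection (the partial-difference-set identity for $\overline{S}$ and its complement, $\overline{S}\,\overline{C}=a\overline{G\setminus C}+(c-1)\overline{C}$, $\overline{S\setminus C}\,\overline{C}=a\overline{G\setminus C}$, and $\overline{G\setminus(S\cup\{e\})}\,\overline{C}=(c-a)\overline{G\setminus C}$), assembled exactly as you do to show the $\mathbb{Z}$-span of $\{e,\overline{C\setminus\{e\}},\overline{S\setminus C},\overline{G\setminus(S\cup\{e\})}\}$ is closed under multiplication and under $x\mapsto x^{(-1)}$. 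Your explicit treatment of the three products involving $\overline{C\setminus\{e\}}$ and your observation that the generating set in the statement should be read as the partition with $G\setminus(S\cup\{e\})$ in place of $G\setminus S$ are correct clarifications of details the paper leaves implicit.
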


\subsection{Strongly regular Neumaier circulant graphs}
It is well known that a circulant graph on $n$ vertices is a Cayley graph over the cyclic group $\mathbb{Z}_{n}$. Furthermore, the Paley graphs of prime order $p$, where $p \equiv 1$ (mod $4$), are the only non-trivial strongly regular Cayley graphs (cf. \cite[Corollary~$5.7$]{MA}). On the other hand, the Paley graphs of prime order $p$, where $p \equiv 1$ (mod $4$), have non-integer eigenvalues. This implies that there is no non-trivial strongly regular Neumaier circulant graph by Proposition \ref{Integer eigenvalues}.
\subsection{Neumaier Cayley graphs with small valency}
Recall that a Neumaier Cayley graph has diameter two (see Observation \ref{Diameter 2}). In this section, we deal with Neumaier Cayley graphs with small valency at most $10$ and therefore, by the results of previous sections, we characterize Neumaier Cayley graphs with small valency at most $10$. We note that the feasible parameters for a strictly Neumaier graph up to $64$ vertices are available in \cite[Table~1]{ACDKZ} and a Neumaier graph with small valency at most $8$ has less than $64$ vertices. In this section, we somewhere discover again these feasible parameters with a different approach and characterize Cayley graphs among them.

It is easy to see that a Neumaier graph with valency $2$ has a regular clique of size $2$ with nexus one. This implies that it is indeed the cycle graph $C_{4}$ which is a strongly regular Neumaier Cayley graph.

Let $\Gamma$ be a Neumaier Cayley graph with diameter two and valency $3$. Then it has at most $7$ vertices by Remark \ref{Complete multipartite graph}. Moreover, the graph $\Gamma$ has even number of vertices because it has odd valency. It follows that $|G|=6$ and therefore $c=2$ by Remark \ref{nexus one}. This implies that the graph $\Gamma$ is indeed the complete bipartite graph $K_{3,3}$ by Remark \ref{Complete multipartite graph} which is a strongly regular Neumaier Cayley graph.

Let $\Gamma$ be a Neumaier Cayley graph with diameter two and parameters $(n,4,\lambda;a,c)$. Then it has at most $13$ vertices by Remark \ref{Complete multipartite graph}. There are three possible cases by Lemma \ref{Counting}; firstly, $c=2$ and therefore it has parameters $(8,4,0;1,2)$ by Remark \ref{Complete multipartite graph}. This implies that the graph $\Gamma$ is the complete bipartite graph $K_{4,4}$. Secondly, $c=3$ and therefore it has parameters $(6,4,2;2,3)$ by Equation \ref{equation 1}. This graph is indeed the complete multipartite graph $K_{2,2,2}$ which is a strongly regular Neumaier Cayley graph (see Example \ref{Example}). Thirdly, $c=3$ and therefore it has parameters $(9,4,1;1,3)$ by Equation \ref{equation 1} but there is no strictly Neumaier graph with such parameters (cf. \cite[Table~1]{ACDKZ}). Furthermore, there is a unique strongly regular graph with $9$ vertices and valency $4$ which is the lattice graph $L_{2}(3)$ and it is a strongly regular Neumaier Cayley graph (see Example \ref{Example}).

Let $\Gamma$ be a Neumaier Cayley graph with diameter two and parameters $(n,5,\lambda;a,c)$. Then it has at most $21$ vertices by Remark \ref{Complete multipartite graph}. If $c=2$, then $n=10$ and $a=1$ by Lemma \ref{Counting}. This graph is indeed the complete bipartite graph $K_{5,5}$ by Remark \ref{Complete multipartite graph} which is a strongly regular Neumaier Cayley graph. If $c=3$, then $n=12$ and $a=1$ by Lemma \ref{Counting}. This implies that the parameter $\lambda=1$ by Equation \ref{equation 1} which is impossible by Remark \ref{edge-regular with parameter}. If $c=4$, then there are two possible cases by Lemma \ref{Counting}; firstly $n=8$ and $a=2$ which is impossible by Equation \ref{equation 2}, and secondly $n=10$ and $a=1$ which is impossible by Remark \ref{nexus one}.

Let $\Gamma$ be a Neumaier Cayley graph with diameter two and parameters $(n,6,\lambda;a,c)$. Then it has at most $31$ vertices by Remark \ref{Complete multipartite graph}. If $c=2$, then $n=12$ and $a=1$ by Lemma \ref{Counting}. This graph is indeed the complete bipartite graph $K_{6,6}$ by Remark \ref{Complete multipartite graph} which is a strongly regular Neumaier Cayley graph. If $c=3$, then there are two possible cases  by Lemma \ref{Counting}; firstly $n=15$ and $a=1$, secondly $n=9$ and $a=2$ that it is easy to see that it is the complete multipartite graph $K_{3,3,3}$ which is a strongly regular Neumaier Cayley graph (see Example \ref{Example}). If $n=15$, then the graph $\Gamma$ has parameters $(15,6,1;3,1)$ by Equation \ref{equation 2} but there is no strictly Neumaier graph with such parameters (cf. \cite[Table~1]{ACDKZ}). Furthermore, there is a unique strongly regular graph with $15$ vertices and valency $6$ that it is the collinearity graph of the unique generalized quadrangle $GQ(2,2)$ (cf. \cite[\S~10.5]{BM}). We check it with \verb"GAP" \cite{GAP} and it is a strongly regular Neumaier graph but is not a Cayley graph.
If $c=4$, then there are three possible cases by Lemma \ref{Counting}; firstly $n=16$ and $a=1$, secondly $n=10$ and $a=2$, and thirdly $n=8$ and $a=3$. If $n=16$, then it has parameters $(16,6,2;1,4)$ by Equation \ref{equation 2}. There is no strictly Neumaier graph with such parameters (cf. \cite[Table~1]{ACDKZ}). Furthermore, there are two strongly regular graphs with $16$ vertices and valency $6$ (cf. \cite[Chap.~12]{BM}); the Hamming graph $H(2,4)$ and the Shrikhande graph. We checked them with \verb"GAP" and the Hamming graph $H(2,4)$ is a strongly regular Neumaier graph but the Shrikhande graph is not a Neumaier graph. We also note that the Hamming graph is also a Cayley graph (cf. \cite[\S 3.3]{DJ}). If $n=10$, then it has parameters $(10,6,3;2,4)$ by Equation \ref{equation 2}. It is easy to see that this graph is indeed the complement of the Petersen graph. This graph is a strongly regular Neumaier graph but it is not a Cayley graph because it is well known the Petersen graph is not a Cayley graph. If $n=8$, then it has parameters $(8,6,4;3,4)$ by Equation \ref{equation 2}. This graph is indeed the complete multipartite graph $K_{2,2,2,2}$ that is a strongly regular Neumaier Cayley graph (see Example \ref{Example}). If $c=5$, then there are two possible cases by Lemma \ref{Counting}; firstly $n=15$ and $a=1$ which is impossible by Remark \ref{nexus one}, and secondly $n=10$ and $a=2$ which is impossible by Equation \ref{equation 2}.

Let $\Gamma$ be a Neumaier Cayley graph with diameter two and parameters $(n,7,\lambda;a,c)$. Then it has at most $43$ vertices by Remark \ref{Complete multipartite graph}. If $c=2$, then $n=14$ and $a=1$ by Lemma \ref{Counting}. This graph is indeed the complete bipartite graph $K_{7,7}$ by Remark \ref{Complete multipartite graph} which is a strongly regular Neumaier Cayley graph. If $c=3$, then the there is only one possible case by Lemma \ref{Counting} that is $n=18$ and $a=1$ which is impossible by Remark \ref{edge-regular with parameter}. If $c=4$, then there are two possible cases by Lemma \ref{Counting}; firstly $n=20$ and $a=1$, secondly $n=12$ and $a=2$ which is impossible by Equation \ref{equation 2}. If $n=20$, then it has parameters $(20,7,2;1,4)$ by Equation \ref{equation 2}. There is neither strictly Neumaier graph with such parameters (cf. \cite[Table~1]{ACDKZ}) nor strongly regular graph (cf. \cite[Chap.~12]{BM}). If $c=5$, then there are two possible cases by Lemma \ref{Counting}; firstly $n=20$ and $a=1$ which is impossible by Remark \ref{nexus one}, and secondly $n=10$ and $a=3$ which is impossible by Equation \ref{equation 2}. Finally, if $c=6$, then there are two possible cases by Lemma \ref{Counting}; firstly $n=18$ and $a=1$ which is impossible by Remark \ref{nexus one}, and secondly $n=12$ and $a=2$ which is impossible by Equation \ref{equation 2}.

Let $\Gamma$ be a Neumaier Cayley graph with diameter two and parameters $(n,8,\lambda;a,c)$. Then it has at most $57$ vertices by Remark \ref{Complete multipartite graph}. If $c=2$, then $n=16$ and $a=1$ by Lemma \ref{Counting}. This graph is indeed the complete bipartite graph $K_{8,8}$ by Remark \ref{Complete multipartite graph} which is a strongly regular Neumaier Cayley graph. If $c=3$, then there are two possible cases by Lemma \ref{Counting}; firstly $n=18$ and $a=1$ which is impossible by Equation \ref{equation 2}, and secondly $n=12$ and $a=2$. If $n=12$, then it has parameters $(12,8,4;2,3)$ that is indeed the complete multipartite graph $K_{4,4,4}$ which is a strongly regular Neumaier Cayley graph (see Example \ref{Example}).
If $c=4$, then there are two possible cases by Lemma \ref{Counting}; firstly $n=24$ and $a=1$, secondly $n=14$ and $a=2$ which is impossible by Equation \ref{equation 2}. If $n=24$, then it has parameters $(24,8,2;1,4)$ by Equation \ref{equation 1}. Furthermore, there is no strongly regular graph with $24$ vertices and valency $8$ (cf. \cite[Chap.~12]{BM}). Moreover, there are only four vertex-transitive strictly Neumaier graphs on $24$ vertices with parameters $(24,8,2;1,4)$ (cf. \cite[\S~4.4.1]{Evans}). These four vertex-transitive strictly Neumaier graphs are available in \verb"GRAPE" format of \verb"GAP" in \cite[\S~4.A]{Evans}. We checked them with \verb"GAP" \cite{GAP} and all of them are Cayley graphs. The first and the third ones are over $G=S_{4}$, the symmetric group on four letters, with the following connection sets, respectively.
\begin{equation*}
S_{1}=\{ (1,3)(2,4), (1,4)(2,3), (1,3,4), (1,4,2), (1,4,3), (1,2,4), (1,2,4,3), (1,3,4,2)\}),
\end{equation*}
\begin{equation*}
S_{3}=\{(1,4)(2,3), (1,3)(2,4), (1,2,4), (1,4,3), (1,4,2), (1,3,4), (1,4), (2,3)\}).
\end{equation*}
The second and the last ones are over $G=\mathbb{Z}_{2} \times A_{4}=$
\begin{equation*}
\seq{a,b,c,d| a^{2}=b^{3}=c^{2}=d^{2}=a^{-1}b^{-1}ab=a^{-1}c^{-1}ac=a^{-1}d^{-1}ad=
\end{equation*}
\begin{equation*}
d^{-1}c^{-1}dc=c^{-1}b^{-1}cbd^{-1}c^{-1}=d^{-1}b^{-1}dbc^{-1}=e}
\end{equation*}
with the following connection sets, respectively.
\begin{equation*}
S_{2}=\{cd,ad,d,bcd,bc,b^{2}d,b^{2}cd,ac\},
\end{equation*}
\begin{equation*}
S_{4}=\{a,b,c,d,bcd,b^{2},ac,b^{2}d\}.
\end{equation*}
If $c=5$, then there are two possible cases by Lemma \ref{Counting}; firstly $n=25$ and $a=1$, secondly $n=15$ and $a=2$ that these are impossible by Equation \ref{equation 2}.
Furthermore, if $c=6$, then there are two possible cases by Lemma \ref{Counting}; firstly $n=25$ and $a=1$, secondly $n=15$ and $a=2$ that these are impossible by Equation \ref{equation 2}. Finally, if $c=7$, then there are two possible cases by Lemma \ref{Counting}; firstly $n=21$ and $a=1$ which is impossible by Remark \ref{nexus one}, secondly $|G|=14$ and $a=2$ which is impossible by Equation \ref{equation 2}.

Let $\Gamma$ be a Neumaier Cayley graph with diameter two and parameters $(n,9,\lambda;a,c)$. Then it has at most $73$ vertices by Remark \ref{Complete multipartite graph}. If $c=2$, then $n=18$ and $a=1$ by Lemma \ref{Counting}. This graph is indeed the complete bipartite graph $K_{9,9}$ by Remark \ref{Complete multipartite graph} which is a strongly regular Neumaier Cayley graph. If $c=3$, then there is only one possible case by Lemma \ref{Counting} that is $n=24$ and $a=1$ which is impossible by Remark \ref{edge-regular with parameter}. If $c=4$, then there are three possible cases by Lemma \ref{Counting}; firstly $n=28$ and $a=1$, secondly $n=16$ and $a=2$, and thirdly $n=12$ and $a=3$. If $n=28$, then there are two known non-isomorphic vertex-transitive Neumaier graphs with parameters $(28,9,2;1,4)$ (cf. \cite[\S~4.A]{Evans}). It turns out that both of them are Cayley graphs as follows. The first one is over the cyclic group $\mathbb{Z}_{28}=\seq{a}$ with the connection set $S=\{a,a^{-1},a^{4},a^{-4},a^{5},a^{-5},a^{7},a^{-7},a^{14}\}$, and the second one is over the abelian group $\mathbb{Z}_{2} \times \mathbb{Z}_{14}=\seq{a,b|a^{2}=b^{14}=1,ab=ba}$ with the connection set $S=\{a,b,b^{-1},b^{7},ab^{2},ab^{-2},ab^{3},ab^{-3},ab^{7}\}$. If $n=16$, then there is only one strictly Neumaier graph with parameters $(16,9,4;2,4)$ up to isomorphism (cf. \cite[\S~4.A]{Evans}) and this graph is a Cayley graph over the dihedral group $D_{16}=\seq{a,b|a^{8}=b^{2}=(ba)^{2}=e}$ with the connection set $\{a,a^{-1},a^{2},a^{-2},b,ba,ba^{3},ba^{4},ba^{6}\}$. This graph is indeed the smallest strictly Neumaier graph and can also be constructed as Cayley graph over the abelian group $\mathbb{Z}_{8} \times \mathbb{Z}_{2}$ (cf. \cite[\S~6]{EGP}). If $n=12$, then this graph is indeed the complete multipartite graph $K_{3,3,3,3}$ which is a strongly regular Neumaier Cayley graph (see Example \ref{Example}). If $c=5$, then there is only one possible case by Lemma \ref{Counting} that is $n=30$ and $a=1$ and therefore it has parameters $(30,9,3;1,5)$ by Equation \ref{equation 1}. There is neither strictly Neumaier graph with such parameters (cf. \cite[Table~1]{ACDKZ}) nor strongly regular graph (cf. \cite[Chap.~12]{BM}). If $c=6$, then there are three possible cases by Lemma \ref{Counting}; firstly $n=30$ and $a=1$ which is impossible by Remark \ref{nexus one}, secondly $n=18$ and $a=2$ which is impossible by Equation \ref{equation 2}, thirdly $n=12$ and $a=4$ which is impossible by Equation \ref{equation 2}. If $c=7$, then there are two possible cases by Lemma \ref{Counting}; firstly $n=28$ and $a=1$ which is impossible by Remark \ref{nexus one}, secondly $n=14$ and $a=3$ which is impossible by Equation \ref{equation 2}. Finally, if $c=8$, then there are two possible cases by Lemma \ref{Counting}; firstly $n=25$ and $a=1$, secondly $n=18$ and $a=2$ which are impossible by Equation \ref{equation 2}.

Let $\Gamma$ be a Neumaier Cayley graph with diameter two and parameters $(n,10,\lambda;a,c)$. Then it has at most $91$ vertices by Remark \ref{Complete multipartite graph}. If $c=2$, then $n=18$ and $a=1$ by Lemma \ref{Counting}. This graph is indeed the complete bipartite graph $K_{10,10}$ by Remark \ref{Complete multipartite graph} which is a strongly regular Neumaier Cayley graph. If $c=3$, then there are two possible cases by Lemma \ref{Counting}; firstly $n=27$ and $a=1$ which is impossible by Equation \ref{equation 1}, and secondly $n=15$ and $a=2$. If $n=15$, then it has parameters $(15,10,5;1,3)$ by Equation \ref{equation 1} and it is indeed the complete multipartite graph $K_{5,5,5}$ which is a strongly regular Neumaier Cayley graph (see Example \ref{Example}).
If $c=4$, then there are two possible cases by Lemma \ref{Counting}; firstly $n=32$ and $a=1$, secondly $n=18$ and $a=2$ which is impossible by Equation \ref{equation 2}. If $n=32$, then it has parameters $(32,10,2;1,4)$ but there is neither strictly Neumaier graph with such parameters (cf. \cite[Table~1]{ACDKZ}) nor strongly regular graph (cf. \cite[Chap.~12]{BM}).
If $c=5$, then there are three possible cases by Lemma \ref{Counting}; firstly $n=35$ and $a=1$, secondly $n=20$ and $a=2$ which is impossible by Equation \ref{equation 2}, and thirdly $n=15$ and $a=3$ which is impossible by Theorem \ref{The number of edges}. If $n=35$, then it has parameters $(35,10,3;1,5)$.
\begin{lem}
There is no Neumaier Cayley graph with parameters $(35,10,3;1,5)$.
\end{lem}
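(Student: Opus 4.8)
The plan rests on the fact that a group of order $35$ is necessarily cyclic, since $\gcd(35,\phi(35))=\gcd(35,24)=1$. Thus $\Gamma=\Cay(\mathbb{Z}_{35},S)$ is a circulant graph, and being a Neumaier graph it has diameter two (Observation \ref{Diameter 2}), hence is connected; so its eigenvalues are exactly the character sums $\eta_j=\sum_{s\in S}\zeta^{js}$, $\zeta=e^{2\pi i/35}$, with $\eta_0=|S|=10$ occurring with multiplicity one. By Remark \ref{eigenvalues} the integer $c-a-1=3$ is an eigenvalue of $\Gamma$, so $\eta_j=3$ for some $j\ne 0$.

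The first task is to pin down $d:=\gcd(j,35)\in\{1,5,7\}$. Since $3\in\mathbb{Q}$ is fixed by $\operatorname{Gal}(\mathbb{Q}(\zeta_{35})/\mathbb{Q})$, one has $\eta_{j'}=3$ for every $j'$ with $\gcd(j',35)=d$. If $d=1$ then $3$ is an eigenvalue of multiplicity at least $\phi(35)=24$; feeding this into $\operatorname{tr}(A)=0$ and $\operatorname{tr}(A^2)=35\cdot 10=350$ and applying Cauchy--Schwarz to the at most ten remaining eigenvalues gives a contradiction. If $d=7$, then since $\zeta^{7}=\zeta_5$, summing $\eta_{7j'}$ over $j'\in\mathbb{Z}_5$ gives $10+4\cdot 3=22$ on one side and $5\,\bigl|\{s\in S:5\mid s\}\bigr|$ on the other, impossible modulo $5$. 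Hence $d=5$; since $\zeta^{5}=\zeta_7$, summing $\eta_{5j'}$ over $j'\in\mathbb{Z}_7$ gives $10+6\cdot 3=28=7\,\bigl|\{s\in S:7\mid s\}\bigr|$, so the four nonzero multiples of $7$ all lie in $S$, and a Vandermonde/DFT-invertibility argument applied to the equalities $\eta_{5j'}=3$ then shows that $S$ meets each of the six nonzero residue classes modulo $7$ in exactly one element.

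This forces a rigid structure: the order-$5$ subgroup $H=\langle 7\rangle$ is a clique, every vertex outside a coset of $H$ has exactly one neighbour in that coset, so $\Gamma$ is seven copies of $K_5$ (the cosets of $H$) joined pairwise by perfect matchings, and $H$ is a regular clique of nexus one. Edges inside a coset automatically have exactly three common neighbours, and since $\Gamma$ is vertex-transitive and every edge between cosets is a translate of some $\{0,s_t\}$, edge-regularity reduces to the condition that each $\{0,s_t\}$ has three common neighbours. Writing $\mathbb{Z}_{35}\cong\mathbb{Z}_5\times\mathbb{Z}_7$ and $S\setminus H=\{(y_i,i):i\in\mathbb{Z}_7\setminus\{0\}\}$ for the odd function $y\colon\mathbb{Z}_7\to\mathbb{Z}_5$ with $y_0=0$, the matching between cosets $X_i$ and $X_j$ is the translation $m\mapsto m-y_{i-j}$ on $\mathbb{Z}_5$, and counting triangles through $\{0,s_t\}$ shows that it has exactly $\#\{m\in\mathbb{Z}_7\setminus\{0,t\}:y_m+y_{t-m}=y_t\}$ common neighbours; hence $\lambda=3$ forces exactly two of the five ``defects'' $y_m+y_{t-m}-y_t$ ($m\ne 0,t$) to be nonzero, for every $t\ne 0$.

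The final, computational step is where the work lies. For each fixed $t$ the five ordered pairs $(m,t-m)$ produce a list of defects with multiplicity pattern $2,2,1$ (one pair is the diagonal $m=t-m$, occurring once among the five), so in order that exactly two of the five be nonzero the diagonal defect must vanish; running this for $t=1,2,3$ (the cases $t=4,5,6$ being equivalent under the automorphism $g\mapsto-g$) yields the $\mathbb{Z}_5$-linear relations $y_1+2y_3=0$, $2y_1-y_2=0$, $2y_2+y_3=0$, whose only solution is $y_1=y_2=y_3=0$. But for $y\equiv 0$ one has $S=(H\setminus\{0\})\cup\{(0,i):i\ne 0\}$, so $\Gamma$ is the rook's graph $K_5\,\square\,K_7$, which is not edge-regular: an edge $\{0,s_t\}$ between two cosets then has $5$ common neighbours, not $3$. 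Thus no admissible $y$ exists, and no Neumaier Cayley graph with parameters $(35,10,3;1,5)$ exists. I expect the main obstacle to be organising the defect bookkeeping in this last step so that the multiplicity pattern, and hence the three forced relations, is transparent; isolating the case $\gcd(j,35)=5$ beforehand is essential, since it leaves only this single explicit family of connection sets to analyse.
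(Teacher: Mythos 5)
Your proof is correct, but it reaches the conclusion by a genuinely different route from the paper's. The paper works directly and combinatorially with the regular clique $C$ containing $e$: since the nexus is $1$, any vertex outside $C$ adjacent to two vertices of $C$ is forced into $C$, and iterating this pins $C$ down to the unique subgroup of order $5$ of $\mathbb{Z}_{35}$; its cosets are then all regular cliques of nexus one, which shows $S$ consists of $\{a^{7},a^{14},a^{21},a^{28}\}$ together with exactly one inverse-closed pair from each remaining pair of cosets, and a short ad hoc case analysis on which of $a^{\pm i},a^{\pm j},a^{\pm k}$ are mutually adjacent contradicts $\lambda=3$. You never touch the clique at all: you extract the eigenvalue $c-a-1=3$ from Remark \ref{eigenvalues}, use Galois conjugation together with the trace identities and a divisibility count to force this rational eigenvalue onto the classes with $\gcd(j,35)=5$, and then recover exactly the same structure of $S$ by Fourier inversion; your concluding ``defect'' bookkeeping is a more systematic version of the paper's final case analysis, reducing $\lambda=3$ to a linear system over $\mathbb{Z}_{5}$ with only the trivial solution and then discarding the rook's graph $K_5\,\square\,K_7$ because its two types of edges have $3$ and $5$ common neighbours. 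I checked the key computations (the Cauchy--Schwarz contradiction for $\gcd=1$, the sum $22\not\equiv 0 \pmod 5$ for $\gcd=7$, the multiplicity pattern $2,2,1$ of the defects, and the three relations $y_1+2y_3=2y_1-y_2=2y_2+y_3=0$ forcing $y\equiv 0$) and they are all sound. Your spectral route is longer but more mechanical and would transfer to other circulant parameter sets; it also derives the structure of $S$ without needing the paper's normalisation that $C\setminus\{e\}$ is inverse-closed. The paper's argument is shorter and entirely elementary.
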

\begin{proof}
Let $Cay(G,S)$ be a Neumaier Cayley graph with parameters $(35,10,3;1,5)$ and $C$ the regular clique containing the identity element $e$. Then $G \cong \mathbb{Z}_{35}$ because the cyclic group of order $35$ is the only group of order $35$. We first prove that the regular clique $C$ is the subgroup of the group $G$. Let $G=\seq{a}$ and $C=\{e,a^{i},a^{-i},a^{j},a^{-j}\}$. Then
\begin{equation*}
\{a^{i},a^{-i},a^{2i},a^{-2i},a^{i-j},a^{-i+j},a^{i+j},a^{-i-j},a^{j},a^{-j},a^{2j},a^{-2j}\} \subseteq S,
\end{equation*}
Moreover, $a^{i-j}$ is adjacent to $e$, $a^{i}$, and $a^{-j}$ and therefore $a^{i-j} \in C$ since the nexus of the regular clique $C$ is $1$. This implies that $a^{i-j}=a^{-i}$ or $a^{i-j}=a^{j}$. With out loss of generality, we can suppose that $a^{i-j}=a^{-i}$ and therefore $C=\{e,a^{i},a^{2i},a^{-i},a^{-2i}\}$. Furthermore, $a^{3i}$ is adjacent to $e$, $a^{i}$, and $a^{2i}$ and therefore $a^{3i} \in C$ because the nexus of the regular clique $C$ is $1$. This implies that $a^{3i}=a^{-2i}$ and $i=7$. It follows that $C=\{e,a^{7},a^{14},a^{21},a^{28}\}$ which is the unique subgroup of order $5$ of the group $G$. It is easy to see that each element of $P=\{C,Ca,Ca^{2},Ca^{3},Ca^{4},Ca^{5},Ca^{6}\}$ is a regular clique with nexus $1$ for the Cayley graph $Cay(G,S)$ and the set $P$ is a partition for the group $G$. This implies that
\begin{equation*}
S=\{a^{7},a^{14},a^{21},a^{28},a^{i},a^{-i},a^{j},a^{-j},a^{k},a^{-k}\},
\end{equation*}
where $a^{i} \in Ca$, $a^{j} \in Ca^{2}$, and $a^{k} \in Ca^{3}$. If $a^{i}$ is adjacent to $a^{-i}$, then $a^{2i} \in S$. This implies that, with out loss of generality, $j=2i$. Moreover, $a^{i}$ is adjacent to $a^{k}$ since $\lambda=3$. It follows that $i-k=-2i$ and therefore $k=3i$, a contradiction with $\lambda=3$. This implies that $a^{i}$ is not adjacent to $a^{-i}$. If $a^{i}$ is adjacent to $a^{j}$, then $i-j=k$ and
\begin{equation*}
S=\{a^{7},a^{14},a^{21},a^{28},a^{i},a^{-i},a^{j},a^{-j},a^{i-j},a^{j-i}\},
\end{equation*}
a contradiction with $\lambda=3$ because the common neighbors of the identity element $e$ and $a^{i}$ can be $a^{j}$ and $a^{i-j}$ and this completes the proof.
\end{proof}
If $c=6$, then there are four possible cases by Lemma \ref{Counting}; firstly $n=36$ and $a=1$, secondly $n=21$ and $a=2$, thirdly $n=16$ and $a=3$ , and fourthly $n=12$ and $a=5$. If $n=36$, then it is easy to see that it is the lattice graph $L_{2}(6)$ which is a strongly regular Neumaier Cayley graph (see Example \ref{Example}). If $n=21$, then it is the triangular graph $T(7)$ which is a strongly regular Neumaier graph with parameters $(21,10,5,4;6,2)$. This graph is indeed a Cayley graph $Cay(G,S)$ over the group $G=\seq{a,b|a^{3}=b^{7}=1,a^{-1}ba=b^{2}}$ with the connection set
\begin{equation*}
\{a,a^{-1},b^{2},b^{-2},ab^{2},b^{-2}a^{-1},ab^{4},b^{-4}a^{-1},a^{2}b,b^{-1}a^{-2}\}.
\end{equation*}
If $n=16$, then it is indeed the complement of Clebsch graph which is a strongly regular graph. We checked it with  \verb"GAP" \cite{GAP} and it is not a Neumaier graph. If $n=12$, then it is the complete multipartite graph $K_{2,2,2,2,2,2}$ which is a strongly regular Neumaier Cayley graph (see Example \ref{Example}).
If $c=7$, then there are three possible cases by Lemma \ref{Counting}; firstly $n=35$ and $a=1$ which is impossible by Remark \ref{nexus one}, secondly $n=21$ and $a=2$, and thirdly $n=14$ and $a=4$ which are impossible by Equation \ref{equation 2}.
If $c=8$, then there is only one possible case by Lemma \ref{Counting} that is $n=32$ and $a=1$ which is impossible by Remark \ref{nexus one}.
Finally, if $c=9$, then there are two possible cases by Lemma \ref{Counting}; firstly $n=27$ and $a=1$ which is impossible by Remark \ref{nexus one}, and secondly $n=18$ and $a=2$ which is impossible by Equation \ref{equation 2}.
\section{Neumaier graphs with diameter Three}
There are a few examples of Neumaier graphs with diameter three. Two of them appeared in Evans' thesis; these are Neumaier graphs with parameters $(24,8,2;1,4)$ (cf. \cite[\S~4.4.2]{Evans}). Recall that a Neumaier Cayley graph has diameter two and therefore there is no Neumaier Cayley graph with diameter three (see Observation \ref{Diameter 2}).
\section*{Acknowledgements}
The author is grateful to the Research Council of Shahid Chamran University of Ahvaz for financial support (SCU.MM1401.29248).

\end{document}